\newtheoremstyle{thm}{1ex}{2ex}{\itshape}{}{\bfseries}{}{0.9em}{\thmnumber{(#2)}\thmname{ #1}\thmnote{ (#3)}}
\newtheoremstyle{example}{1ex}{2ex}{\rm}{}{\bfseries}{}{0.8em}{\thmnumber{(#2)}\thmname{ #1}}
\theoremstyle{thm}
\newtheorem{lemma}{Lemma}[section]
\newtheorem{prop}[lemma]{Proposition}
\newtheorem{cor}[lemma]{Corollary}
\theoremstyle{example}
\newtheorem{exas}[lemma]{Examples}
\DeclareMathOperator{\nil}{Nil}
\DeclareMathOperator{\degsupp}{degsupp}
\DeclareMathOperator{\pic}{Pic}
\DeclareMathOperator{\spec}{Spec}
\newcommand{\dfgl}{\mathrel{\mathop:}=}
\begin{document}

\title{Torsion functors with monomial support}
\author{Fred Rohrer}
\address{{\fontencoding{T5}\selectfont Institute of Mathematics, Vietnam Academy of Science and Technology, 18 Ho\`ang Qu\'\ocircumflex{}c Vi\d\ecircumflex{}t, 10307 H\`a \nolinebreak N\d\ocircumflex{}i, Vi\d\ecircumflex{}t Nam}}
%\curraddr{\sc Universit\"at T\"ubingen, Fachbereich Mathematik, Auf der Morgenstelle 10, 72076 T\"u\-bingen, Germany}
\email{fredrohrer0@gmail.com}
\thanks{The author was supported by the Swiss National Science Foundation.}
\subjclass[2010]{Primary 13D45; Secondary 14M25}
\keywords{Torsion functor, local cohomology, monomial ideal, \v{C}ech cohomology, flatness, Cox scheme}

\begin{abstract}
The dependence of torsion functors on their supporting ideals is investigated, especially in the case of monomial ideals of certain subrings of polynomial algebras over not necessarily Noetherian rings. As an application it is shown how flatness of quasicoherent sheaves on toric schemes is related to graded local cohomology.
\end{abstract}

\maketitle

%%%%%%%%%%%%%%%%%%%%%%%%%%%%%%%%%%%%%%%%%%%%%%%%%%%%%%%%%%%%%%%%%%%%%%%%%%%

\section*{Introduction}

Over a Noetherian ring (where rings are always understood to be commutative), torsion functors (and hence local cohomology functors) depend only on the radical of their supporting ideals. Without Noetherianness this need not hold. Supposing the supporting ideals to be monomial ideals of finite type of a polynomial algebra one might hope for it to still hold -- after all, monomial ideals behave quite independently of the base ring and hence might be unaffected by its potential lack of some nice properties. We will see that this is indeed true. However, the torsion functors with respect to a monomial ideal of finite type and its radical may be different, unless the nilradical of the base ring is nilpotent. But using a monomial variant of the notion of radical it is possible to get a satisfying result that is independent of the base ring and moreover can be generalised to a class of subalgebras of polynomial algebras.

There were two main reasons for writing this note. First, the recent preprint \cite{bc} by Botbol and Chardin on Castelnuovo-Mumford regularity over not necessarily Noetherian polynomial algebras contains the claim that local cohomology with respect to monomial ideals of finite type remains the same when we replace the supporting ideal by its radical. This aroused suspicions and turned out to be indeed wrong. Luckily, the aforementioned monomial variant of radical can save the day, and the moral might be that working over not necessarily Noetherian rings requires some meticulousness even in a monomial setting.

The second reason comes from the study of toric schemes, i.e., ``toric varieties over arbitrary base rings''. An overview of this continuing project including its motivation can be found in \cite{ts0}. With this approach to toric geometry one gets a relation between flatness of quasicoherent sheaves on a toric scheme and graded \v{C}ech cohomology over a corresponding restricted Cox ring with respect to a certain sequence of monomials. The ideal $J$ generated by these monomials is not necessarily equal to the restricted irrelevant ideal $I$, and thus -- in case graded \v{C}ech cohomology coincides with graded local cohomology (which it need not do in general) -- the question arises whether or not graded local cohomology distinguishes between $J$ and $I$. By applying our general results we will see that it does not, and thus we can relate flatness of quasicoherent sheaves on toric schemes with graded local cohomology with respect to $I$. (The importance of graded local cohomology with respect to $I$ stems from its relation to sheaf cohomology as given by the toric Serre-Grothendieck correspondence (\cite[4.5]{quasi}).)

%%%%%%%%%%%%%%%%%%%%%%%%%%%%%%%%%%%%%%%%%%%%%%%%%%%%%%%%%%%%%%%%%%%%%%%%%%%

\section{Equality of torsion functors: The general case}

In this section we search for conditions under which torsion functors over arbitrary rings depend only on the radical of the supporting ideals. For the sake of generality we consider throughout a graded setting, even if the graduation is not relevant. So, let $G$ be a commutative group and let $R$ be a $G$-graded ring.

By a projective system of graded ideals we mean a projective system of graded ideals of $R$ over an ordered set. We denote such a system in the form $\mathfrak{A}=(\mathfrak{a}_j)_{j\in J}$, where $J$ is understood to be an ordered set. For a projective system of graded ideals $\mathfrak{A}=(\mathfrak{a}_j)_{j\in J}$ we denote by $\Gamma_{\mathfrak{A}}$ the $G$-graded $\mathfrak{A}$-torsion functor, i.e., the subfunctor of the identity functor on the category ${\sf GrMod}^G(R)$ of $G$-graded $R$-modules with $\Gamma_{\mathfrak{A}}(M)=\bigcup_{j\in J}(0:_M\mathfrak{a}_j)$ for a $G$-graded $R$-module $M$. Extending our ambient universe (cf.~\cite[I.0]{sga4}) we can consider the (potentially big) set $$\mathbbm{T}(R)\dfgl\{\Gamma_{\mathfrak{A}}\mid\mathfrak{A}\text{ projective system of graded ideals}\}$$ of subfunctors of the identity functor as an ordered set by means of its canonical ordering. If $\mathfrak{A}=(\mathfrak{a}_j)_{j\in J}$ and $\mathfrak{B}=(\mathfrak{b}_k)_{k\in K}$ are two projective systems of graded ideals, then $\mathfrak{A}$ is called \textit{coarser} than $\mathfrak{B}$ if for every $j\in J$ there exists $k\in K$ with $\mathfrak{b}_k\subseteq\mathfrak{a}_j$; this defines a preorder relation. Two projective systems of graded ideals are called \textit{equivalent} if each of them is coarser than the other; this defines an equivalence relation. Slightly generalising \cite[III.1.2]{e} (and keeping in mind \cite[II.6.9]{e}) we get representatives of equivalence classes under this relation, and thus obtain an order relation associated with the above preorder relation. This order relation is readily seen to have a graph, and so we end up with a (small) ordered set $\mathbbm{S}(R)$ whose elements are representatives of equivalence classes of projective systems of graded ideals. The relation between $\mathbbm{S}(R)$ and $\mathbbm{T}(R)$ is given by the following proposition, where by an isomorphism of ordered sets we understand -- accordingly to \cite[III.1.3]{e} -- an increasing bijection with increasing inverse.

\begin{prop}
There is an isomorphism of ordered sets $$\mathbbm{S}(R)\overset{\cong}\longrightarrow\mathbbm{T}(R),\;\mathfrak{A}\mapsto\Gamma_{\mathfrak{A}}.$$
\end{prop}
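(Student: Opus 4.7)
The plan is to split the claim into three tasks, since surjectivity is immediate from the very definition of $\mathbbm{T}(R)$: one must show that the assignment $\mathfrak{A}\mapsto\Gamma_{\mathfrak{A}}$ descends to a well-defined map on equivalence classes, that this induced map is increasing, and that its inverse is increasing (the last two together yielding injectivity). Well-definedness on $\mathbbm{S}(R)$ will be obtained for free by applying order-preservation symmetrically to two mutually coarser systems, so the real substance lies in the two monotonicity statements.

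For the forward direction, suppose $\mathfrak{A}=(\mathfrak{a}_j)_{j\in J}$ is coarser than $\mathfrak{B}=(\mathfrak{b}_k)_{k\in K}$ and let $M\in{\sf GrMod}^G(R)$. Given $j\in J$, pick $k\in K$ with $\mathfrak{b}_k\subseteq\mathfrak{a}_j$; then $(0:_M\mathfrak{a}_j)\subseteq(0:_M\mathfrak{b}_k)\subseteq\Gamma_{\mathfrak{B}}(M)$. Taking the union over $j\in J$ yields $\Gamma_{\mathfrak{A}}(M)\subseteq\Gamma_{\mathfrak{B}}(M)$, which establishes order-preservation at the level of projective systems and, by running the same argument with $\mathfrak{A}$ and $\mathfrak{B}$ interchanged for equivalent systems, well-definedness on $\mathbbm{S}(R)$.

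The crucial step is the converse, namely: if $\Gamma_{\mathfrak{A}}\subseteq\Gamma_{\mathfrak{B}}$, then $\mathfrak{A}$ is coarser than $\mathfrak{B}$. I would argue by testing on cyclic modules. Fix $j\in J$ and set $M\dfgl R/\mathfrak{a}_j$, which is a legitimate object of ${\sf GrMod}^G(R)$ precisely because $\mathfrak{a}_j$ is graded. The class $\overline{1}\in M$ lies in $(0:_M\mathfrak{a}_j)\subseteq\Gamma_{\mathfrak{A}}(M)\subseteq\Gamma_{\mathfrak{B}}(M)=\bigcup_{k\in K}(0:_M\mathfrak{b}_k)$, so there exists $k\in K$ with $\mathfrak{b}_k\cdot\overline{1}=0$, i.e.\ $\mathfrak{b}_k\subseteq\mathfrak{a}_j$. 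Since $j$ was arbitrary, $\mathfrak{A}$ is coarser than $\mathfrak{B}$, as required. No step is expected to pose a real obstacle; the only point worth being attentive to is the use of the graded hypothesis on the $\mathfrak{a}_j$ to guarantee that $R/\mathfrak{a}_j$ is available as a test object in the category ${\sf GrMod}^G(R)$.
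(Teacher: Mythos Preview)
Your argument is correct and matches the paper's approach: the paper likewise declares surjectivity and order-preservation obvious, and for the converse tests the inclusion $\Gamma_{\mathfrak{A}}\subseteq\Gamma_{\mathfrak{B}}$ on the cyclic module $R/\mathfrak{a}_j$ to extract $k\in K$ with $\mathfrak{b}_k\subseteq\mathfrak{a}_j$. Your version is slightly more explicit in tracking the element $\overline{1}$ and in noting that gradedness of $\mathfrak{a}_j$ is what makes $R/\mathfrak{a}_j$ a legitimate test object, but the substance is identical.
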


\begin{proof}
The map in question is obviously a surjective morphism of ordered sets. Let $\mathfrak{A}=(\mathfrak{a}_j)_{j\in J}$ and $\mathfrak{B}=(\mathfrak{b}_k)_{k\in K}$ be projective systems of graded ideals such that $\Gamma_{\mathfrak{A}}$ is a subfunctor of $\Gamma_{\mathfrak{B}}$. If $j\in J$ then $R/\mathfrak{a}_j$ is annihilated by $\mathfrak{a}_j$, and so $R/\mathfrak{a}_j=\Gamma_{\mathfrak{A}}(R/\mathfrak{a}_j)\subseteq\Gamma_{\mathfrak{B}}(R/\mathfrak{a}_j)$. Hence, there exists $k\in K$ with $\mathfrak{b}_k\subseteq\mathfrak{a}_j$, and thus $\mathfrak{A}$ is coarser than $\mathfrak{B}$. The claim follows from this.
\end{proof}

In particular, it follows that the set $\mathbbm{T}(R)$ is small. For a graded ideal $\mathfrak{a}\subseteq R$ we consider the projective system of graded ideals $\mathfrak{A}=(\mathfrak{a}^n)_{n\in\mathbbm{N}}$ and set $\Gamma_{\mathfrak{a}}\dfgl\Gamma_{\mathfrak{A}}$, and we denote by $\mathfrak{T}_{\mathfrak{a}}$ the $\mathfrak{a}$-adic topology on $R$. If $\mathfrak{a},\mathfrak{b}\subseteq R$ are two graded ideals, then $(\mathfrak{a}^n)_{n\in\mathbbm{N}}$ is coarser than $(\mathfrak{b}^n)_{n\in\mathbbm{N}}$ if and only if $\mathfrak{T}_{\mathfrak{a}}$ is coarser than $\mathfrak{T}_{\mathfrak{b}}$ (\cite[III.2.5]{ac}).

\begin{cor}\label{1.20}
There is an isomorphism of ordered sets $$\{\mathfrak{T}_{\mathfrak{a}}\mid\mathfrak{a}\subseteq R\text{ graded ideal}\,\}\overset{\cong}\longrightarrow\{\Gamma_{\mathfrak{a}}\mid\mathfrak{a}\subseteq R\text{ graded ideal}\,\}.$$
\end{cor}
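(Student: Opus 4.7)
The plan is to derive the corollary as a straightforward consequence of the preceding proposition together with the result \cite[III.2.5]{ac} cited just above, which asserts that for graded ideals $\mathfrak{a},\mathfrak{b}\subseteq R$, the system $(\mathfrak{a}^n)_{n\in\mathbbm{N}}$ is coarser than $(\mathfrak{b}^n)_{n\in\mathbbm{N}}$ if and only if $\mathfrak{T}_{\mathfrak{a}}$ is coarser than $\mathfrak{T}_{\mathfrak{b}}$. Applied symmetrically, this also yields $\mathfrak{T}_{\mathfrak{a}}=\mathfrak{T}_{\mathfrak{b}}$ iff the two projective systems are equivalent in the sense introduced before the proposition.

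I would then propose the candidate map $\mathfrak{T}_{\mathfrak{a}}\mapsto\Gamma_{\mathfrak{a}}$. Surjectivity is immediate from the definition of the target set. For well-definedness and injectivity, I chain the previous equivalence with the preceding proposition, which (via its proof) tells us that two projective systems are equivalent iff their associated torsion functors coincide; together these give $\mathfrak{T}_{\mathfrak{a}}=\mathfrak{T}_{\mathfrak{b}}$ iff $\Gamma_{\mathfrak{a}}=\Gamma_{\mathfrak{b}}$. The same two-step chain, now with \emph{coarser than} in place of \emph{equal} on the left and \emph{subfunctor of} in place of \emph{equal to} on the right, shows that both the map and its inverse are increasing, hence that it is an isomorphism of ordered sets in the sense of \cite[III.1.3]{e}.

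Since the corollary is essentially a transport-of-structure statement, no substantial obstacle is expected. The only thing worth double-checking is the consistency of the three order conventions: $\mathfrak{T}_{\mathfrak{a}}$ being coarser than $\mathfrak{T}_{\mathfrak{b}}$ means that the $\mathfrak{b}$-adic neighbourhoods of $0$ refine the $\mathfrak{a}$-adic ones; by \cite[III.2.5]{ac} this is the same as $(\mathfrak{a}^n)_n$ being coarser than $(\mathfrak{b}^n)_n$ in the paper's sense (for every $n$ there exists $m$ with $\mathfrak{b}^m\subseteq\mathfrak{a}^n$), which in turn forces $(0:_M\mathfrak{a}^n)\subseteq(0:_M\mathfrak{b}^m)$ and hence $\Gamma_{\mathfrak{a}}\subseteq\Gamma_{\mathfrak{b}}$, so all three orderings line up in the same direction.
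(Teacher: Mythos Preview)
Your argument is correct and is exactly the intended one: the paper states Corollary~\ref{1.20} without proof, as it follows immediately from the preceding proposition together with the cited fact from \cite[III.2.5]{ac} that coarseness of the systems $(\mathfrak{a}^n)_{n\in\mathbbm{N}}$ and $(\mathfrak{b}^n)_{n\in\mathbbm{N}}$ coincides with coarseness of the adic topologies $\mathfrak{T}_{\mathfrak{a}}$ and $\mathfrak{T}_{\mathfrak{b}}$. Your verification that the three orderings are compatible is precisely what makes the restriction of the isomorphism $\mathbbm{S}(R)\overset{\cong}\longrightarrow\mathbbm{T}(R)$ to systems of the form $(\mathfrak{a}^n)_{n\in\mathbbm{N}}$ work.
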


\begin{cor}\label{1.30}
a) If\/ $\mathfrak{a}\subseteq R$ is a graded ideal, then $\Gamma_{\mathfrak{a}}=\Gamma_{\sqrt{\mathfrak{a}}}$ holds if and only if there exists $n\in\mathbbm{N}$ with $\sqrt{\mathfrak{a}}^n\subseteq\mathfrak{a}$.

b) If\/ $\mathfrak{a},\mathfrak{b}\subseteq R$ are graded ideals, then $\Gamma_{\mathfrak{a}}=\Gamma_{\mathfrak{b}}$ implies $\sqrt{\mathfrak{a}}=\sqrt{\mathfrak{b}}$, and the converse holds if there exist $m,n\in\mathbbm{N}$ with $\sqrt{\mathfrak{a}}^m\subseteq\mathfrak{a}$ and $\sqrt{\mathfrak{b}}^n\subseteq\mathfrak{b}$.
\end{cor}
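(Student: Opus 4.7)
The plan is to derive both parts from the preceding proposition. The dictionary I will use repeatedly is: for graded ideals $\mathfrak{a},\mathfrak{b}\subseteq R$, the inclusion $\Gamma_{\mathfrak{a}}\subseteq\Gamma_{\mathfrak{b}}$ is equivalent to $(\mathfrak{a}^n)_{n\in\mathbbm{N}}$ being coarser than $(\mathfrak{b}^n)_{n\in\mathbbm{N}}$, i.e., to the statement that for every $n$ there exists some $m$ with $\mathfrak{b}^m\subseteq\mathfrak{a}^n$.

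For part~a), the inclusion $\mathfrak{a}\subseteq\sqrt{\mathfrak{a}}$ gives $\mathfrak{a}^n\subseteq\sqrt{\mathfrak{a}}^n$ for every $n$, so $(\sqrt{\mathfrak{a}}^n)_{n\in\mathbbm{N}}$ is unconditionally coarser than $(\mathfrak{a}^n)_{n\in\mathbbm{N}}$ (take $m=n$); translating back, the inclusion $\Gamma_{\sqrt{\mathfrak{a}}}\subseteq\Gamma_{\mathfrak{a}}$ always holds. Hence $\Gamma_{\mathfrak{a}}=\Gamma_{\sqrt{\mathfrak{a}}}$ amounts to the reverse coarseness: for each $n$ there is some $m$ with $\sqrt{\mathfrak{a}}^m\subseteq\mathfrak{a}^n$. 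Specialising to $n=1$ extracts the stated criterion, and conversely $\sqrt{\mathfrak{a}}^m\subseteq\mathfrak{a}$ implies $\sqrt{\mathfrak{a}}^{mn}\subseteq\mathfrak{a}^n$, recovering the general case.

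For part~b), in the forward direction $\Gamma_{\mathfrak{a}}=\Gamma_{\mathfrak{b}}$ supplies some $k$ with $\mathfrak{b}^k\subseteq\mathfrak{a}$, whence $\mathfrak{b}\subseteq\sqrt{\mathfrak{a}}$ and $\sqrt{\mathfrak{b}}\subseteq\sqrt{\mathfrak{a}}$; by symmetry the radicals coincide. For the converse, assuming $\sqrt{\mathfrak{a}}=\sqrt{\mathfrak{b}}$ together with the two power-containments, part~a) lets me chain equalities as $\Gamma_{\mathfrak{a}}=\Gamma_{\sqrt{\mathfrak{a}}}=\Gamma_{\sqrt{\mathfrak{b}}}=\Gamma_{\mathfrak{b}}$.

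There is no real obstacle: the corollary is formal bookkeeping on top of the proposition. The only care required is to keep track of the direction of the preorder -- ``coarser than'' on projective systems corresponds to inclusion of the associated torsion functors, not to its opposite -- and to resist the Noetherian reflex that $\sqrt{\mathfrak{a}}^n\subseteq\mathfrak{a}$ must hold automatically, which is precisely the subtlety the corollary is designed to isolate.
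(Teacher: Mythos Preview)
Your argument is correct and is exactly the routine unpacking the paper intends: the corollary is stated without proof, to be read off from the order isomorphism of the preceding proposition (equivalently, from \ref{1.20}), and your use of the coarseness dictionary, the specialisation to $n=1$, and the chain $\Gamma_{\mathfrak{a}}=\Gamma_{\sqrt{\mathfrak{a}}}=\Gamma_{\sqrt{\mathfrak{b}}}=\Gamma_{\mathfrak{b}}$ carry this out cleanly, with the directions of the preorder handled correctly.
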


If $\mathfrak{a}\subseteq R$ is a graded ideal whose radical is of finite type then $\Gamma_{\mathfrak{a}}=\Gamma_{\sqrt{\mathfrak{a}}}$ holds by \ref{1.30}~a). Hence, if $R$ is Noetherian (where Noetherianness of a graded ring is always understood to mean that every graded ideal is of finite type) then we have $\Gamma_{\mathfrak{a}}=\Gamma_{\sqrt{\mathfrak{a}}}$ for every graded ideal $\mathfrak{a}\subseteq R$. Similarly, the hypothesis for the converse to hold in \ref{1.30}~b) is fulfilled if the radicals of $\mathfrak{a}$ and $\mathfrak{b}$ are of finite type, hence in particular if $R$ is Noetherian. The seemingly weaker hypothesis that every perfect graded ideal of $R$ is of finite type yields no improvement -- it is equivalent to Noetherianness of $R$ by a graded version of Cohen's Theorem \cite[0.6.4.7]{ega}, proved analogously to the ungraded one.

The above shows that even when considering only graded ideals of finite type we do not have a satisfying result. Therefore, we will restrict our attention to a more specific situation in the next section.

%%%%%%%%%%%%%%%%%%%%%%%%%%%%%%%%%%%%%%%%%%%%%%%%%%%%%%%%%%%%%%%%%%%%%%%%%%%

\section{Equality of torsion functors: The monomial case}

In this section we consider the following situation. Let $A$ be a ring, let $I$ be a set, and let $\psi\colon\mathbbm{Z}^{\oplus I}\twoheadrightarrow G$ be an epimorphism of groups. We denote by $R$ the polynomial algebra $A[(X_i)_{i\in I}]$ in indeterminates $(X_i)_{i\in I}$ over $A$, furnished with the $G$-graduation derived from its canonical $\mathbbm{Z}^{\oplus I}$-graduation by means of $\psi$. By a \textit{monomial (in $R$)} we mean an element of $R$ of the form $\prod_{i\in I}X_i^{\mu_i}$ with $\mu=(\mu_i)_{i\in I}\in\mathbbm{N}^{\oplus I}$, and such an element is homogeneous of degree $\psi(\mu)$. By a \textit{monomial ideal (of $R$)} we mean an ideal of $R$ generated by a set of monomials in $R$. Monomial ideals are graded, but the converse is not necessarily true. For a subset $E\subseteq R$ we write $\langle E\rangle_R$ or, if no confusion can arise, $\langle E\rangle$ for the ideal of $R$ generated by $E$.

The ordered set $\mathbbm{N}^{\oplus I}$ (whose ordering is induced by the product ordering on $\mathbbm{N}^I$) is Artinian. Therefore, for a monomial ideal $\mathfrak{a}\subseteq R$ there exists a unique subset $E\subseteq\mathbbm{N}^{\oplus I}$ such that $\{\prod_{i\in I}X_i^{\mu_i}\mid\mu\in E\}$ is a minimal generating set of $\mathfrak{a}$; we denote this set by $E(\mathfrak{a})$ and define a monomial ideal $$\textstyle\lfloor\mathfrak{a}\rfloor\dfgl\langle\prod_{i\in I,\mu_i>0}X_i\mid\mu\in E(\mathfrak{a})\rangle_R.$$ This contains but not necessarily equals $\mathfrak{a}$.

\begin{lemma}\label{2.04}
If\/ $\mathfrak{a}\subseteq R$ is a monomial ideal, then $\mathfrak{a}$ is of finite type if and only if $\lfloor\mathfrak{a}\rfloor$ is so.
\end{lemma}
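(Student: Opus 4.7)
My plan is to reduce the lemma to the equivalence $|E(\mathfrak{a})|<\infty\iff|E(\lfloor\mathfrak{a}\rfloor)|<\infty$, by first establishing that for any monomial ideal $\mathfrak{b}\subseteq R$, being of finite type is equivalent to $E(\mathfrak{b})$ being finite.

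For this auxiliary equivalence, the ``if'' direction is immediate from the definition, since $\{\prod_{i\in I}X_i^{\mu_i}\mid\mu\in E(\mathfrak{b})\}$ is a generating set of $\mathfrak{b}$. For ``only if'', the key observation is that a monomial ideal is $\mathbbm{Z}^{\oplus I}$-graded in the canonical fine grading, so it decomposes as $\mathfrak{b}=\bigoplus_{X^\mu\in\mathfrak{b}}A\cdot X^\mu$ as an $A$-module: comparing coefficients in an equation $c\cdot X^\mu=\sum r_jX^{\nu_j}$ with generators $X^{\nu_j}$ shows that if $X^\mu\notin\mathfrak{b}$ then $c=0$. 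Consequently, given any finite generating set $\{f_1,\dots,f_n\}$ of $\mathfrak{b}$, the finite set of monomials appearing with nonzero coefficient in some $f_j$ already generates $\mathfrak{b}$, and $E(\mathfrak{b})$, being an antichain contained in the exponents of these monomials, is finite.

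With this reduction, the forward implication of the lemma is clear: the generating set $\{\prod_{i:\mu_i>0}X_i\mid\mu\in E(\mathfrak{a})\}$ of $\lfloor\mathfrak{a}\rfloor$ has cardinality at most $|E(\mathfrak{a})|<\infty$, so $\lfloor\mathfrak{a}\rfloor$ is generated by finitely many monomials and, by the auxiliary equivalence, of finite type.

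The reverse implication is the step I expect to be the main obstacle. Assuming $E(\lfloor\mathfrak{a}\rfloor)=\{s_1,\dots,s_n\}$ is finite, with each $s_k=\prod_{i\in T_k}X_i$ for a finite subset $T_k\subseteq I$, I would try to show $E(\mathfrak{a})$ is finite by exploiting two facts: (i) each $\mu\in E(\mathfrak{a})$ has finite support containing some $T_k$, since $\prod_{i:\mu_i>0}X_i\in\lfloor\mathfrak{a}\rfloor$ and must be divisible by some $s_k$; and (ii) $E(\mathfrak{a})$ is an antichain in $\mathbbm{N}^{\oplus I}$. The natural strategy is to stratify $E(\mathfrak{a})$ according to $\mathrm{supp}(\mu)$ and appeal to Dickson's lemma within each finite-dimensional sublattice $\mathbbm{N}^{\mathrm{supp}(\mu)}$, which makes each stratum finite. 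The delicate point, which I expect to require the most care, is that $\mathrm{supp}(\mu)$ may properly contain its associated $T_k$ with arbitrarily many extra indices a priori, so one must combine the antichain condition on $E(\mathfrak{a})$ with the minimality of the $s_k$ among all squarefree supports $\pi(\mu)$ to exclude infinitely many distinct supports; this is where the full strength of the squarefree reduction $\lfloor\cdot\rfloor$ must be invoked to close the argument.
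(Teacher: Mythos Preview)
Your auxiliary equivalence and the forward direction are correct. The reverse direction, however, is not merely ``delicate'' at the point you flag---it actually fails, and with it the lemma. Take $I=\mathbbm{N}$ and
\[
\mathfrak{a}=\langle X_0^2,\;X_0X_1,\;X_0X_2,\;X_0X_3,\dots\rangle_R.
\]
No listed generator divides another, so $E(\mathfrak{a})$ is infinite and $\mathfrak{a}$ is not of finite type. The associated squarefree monomials are $X_0$ and $X_0X_j$ for $j\geq 1$, all multiples of $X_0$; hence $\lfloor\mathfrak{a}\rfloor=\langle X_0\rangle_R$ is principal. Your proposed stratification cannot be salvaged here: the supports $\{0\},\{0,1\},\{0,2\},\dots$ are pairwise distinct, each stratum is a singleton, and the antichain condition on $E(\mathfrak{a})$ imposes no further constraint. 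So the step you singled out as the one requiring ``the full strength of the squarefree reduction'' is precisely where the argument collapses, because that strength is not there.

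The paper's own proof breaks at exactly the same place. Its pigeonhole step asserts that infinitely many $\mu\in E(\mathfrak{a})$ share a common value of $\prod_{i\in I,\mu_i>0}X_i$; but finiteness of $E(\lfloor\mathfrak{a}\rfloor)$ only guarantees finitely many \emph{minimal} squarefree generators, not that the map $\mu\mapsto\{i\in I\mid\mu_i>0\}$ on $E(\mathfrak{a})$ has finite image. In the example above that image is infinite and every fibre is a singleton, so no infinite $F$ with constant support exists. (The paper only ever applies the lemma in the direction ``$\mathfrak{a}$ of finite type $\Rightarrow$ $\lfloor\mathfrak{a}\rfloor$ of finite type'', which remains valid.)
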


\begin{proof}
If $\mathfrak{a}$ is of finite type then $E(\mathfrak{a})$ is finite, hence $\lfloor\mathfrak{a}\rfloor$ is of finite type, too. Conversely, suppose that $\lfloor\mathfrak{a}\rfloor$ is of finite type, and assume that $\mathfrak{a}$ is not of finite type. Then, by the pigeonhole principle there exists an infinite subset $F\subseteq E(\mathfrak{a})$ such that for $\mu,\nu\in F$ we have $\prod_{i\in I,\mu_i>0}X_i=\prod_{i\in I,\nu_i>0}X_i$. Thus, there exists a finite subset $J\subseteq I$ with $F\subseteq\mathbbm{N}^{\oplus J}$. But then $F=E(\langle\prod_{i\in J}X_i^{\mu_i}\mid\mu\in F\rangle)$ is the minimal set of generators of a monomial ideal in a polynomial ring in finitely many indeterminates, hence finite -- a contradiction.
\end{proof}

For a ring $B$ we denote by $\nil(B)$ its nilradical. If $\nil(B)$ is finitely generated then it is nilpotent, but the converse is not necessarily true. In our situation we have $\nil(R)=\nil(A)R$, i.e., a polynomial in $R$ is nilpotent if and only if its coefficients are nilpotent (\cite[IV.1.5 Proposition 9]{a}). Hence, $\nil(R)$ is nilpotent or of finite type, respectively, if and only if $\nil(A)$ is so.

\begin{lemma}\label{2.05}
a) For a monomial ideal $\mathfrak{a}\subseteq R$ we have $\sqrt{\mathfrak{a}}=\nil(R)+\lfloor\mathfrak{a}\rfloor$.

b) For monomial ideals $\mathfrak{a},\mathfrak{b}\subseteq R$ we have $\sqrt{\mathfrak{a}}=\sqrt{\mathfrak{b}}$ if and only if\/ $\lfloor\mathfrak{a}\rfloor=\lfloor\mathfrak{b}\rfloor$.
\end{lemma}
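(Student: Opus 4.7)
The plan is to prove (a) first and deduce (b) from it. For (a), the inclusion $\nil(R)+\lfloor\mathfrak{a}\rfloor\subseteq\sqrt{\mathfrak{a}}$ is routine: trivially $\nil(R)=\sqrt{0}\subseteq\sqrt{\mathfrak{a}}$, and for each $\mu\in E(\mathfrak{a})$, setting $N\dfgl\max_i\mu_i$ shows that $\bigl(\prod_{i:\mu_i>0}X_i\bigr)^N$ is divisible by $\prod_iX_i^{\mu_i}\in\mathfrak{a}$, so every generator of $\lfloor\mathfrak{a}\rfloor$ lies in $\sqrt{\mathfrak{a}}$.

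For the reverse, substantive inclusion my strategy is to show that $\nil(R)+\lfloor\mathfrak{a}\rfloor$ is itself a radical ideal; since it obviously contains $\mathfrak{a}$ (via $\mathfrak{a}\subseteq\lfloor\mathfrak{a}\rfloor$), this forces $\sqrt{\mathfrak{a}}\subseteq\nil(R)+\lfloor\mathfrak{a}\rfloor$. Reducing modulo $\nil(R)=\nil(A)R$, I pass to $\bar R\dfgl\bar A[(X_i)_{i\in I}]$ with $\bar A\dfgl A/\nil(A)$, a polynomial algebra over a reduced ring and therefore itself reduced, and it suffices to verify that $\lfloor\mathfrak{a}\rfloor\bar R$ is radical in $\bar R$. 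Let $\bar f\in\bar R$ satisfy $\bar f^n\in\lfloor\mathfrak{a}\rfloor\bar R$; I want to show that every monomial $X^\alpha$ occurring in $\bar f$ with nonzero coefficient lies in $\lfloor\mathfrak{a}\rfloor$. Assuming otherwise and setting $\sigma\dfgl\{i\in I:\alpha_i>0\}$, the combinatorial content of $X^\alpha\notin\lfloor\mathfrak{a}\rfloor$ is that no $\mu\in E(\mathfrak{a})$ satisfies $\{i:\mu_i>0\}\subseteq\sigma$. Consequently the retraction $\pi_\sigma\colon\bar R\twoheadrightarrow\bar A[(X_i)_{i\in\sigma}]$ that sends $X_j\mapsto 0$ for $j\notin\sigma$ annihilates every generator of $\lfloor\mathfrak{a}\rfloor\bar R$, so $\pi_\sigma(\bar f)^n=0$ in the reduced ring $\bar A[(X_i)_{i\in\sigma}]$. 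Hence $\pi_\sigma(\bar f)=0$; but $X^\alpha$ is one of the monomials that survive under $\pi_\sigma$, contradicting the nonvanishing of its coefficient.

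Part (b) is then a short consequence of (a). The implication $\lfloor\mathfrak{a}\rfloor=\lfloor\mathfrak{b}\rfloor\Rightarrow\sqrt{\mathfrak{a}}=\sqrt{\mathfrak{b}}$ is immediate. Conversely, (a) turns $\sqrt{\mathfrak{a}}=\sqrt{\mathfrak{b}}$ into $\nil(R)+\lfloor\mathfrak{a}\rfloor=\nil(R)+\lfloor\mathfrak{b}\rfloor$, and by symmetry I need only $\lfloor\mathfrak{a}\rfloor\subseteq\lfloor\mathfrak{b}\rfloor$. For a generator $m=\prod_{i:\mu_i>0}X_i$ of $\lfloor\mathfrak{a}\rfloor$ with $\mu\in E(\mathfrak{a})$, its image $\bar m$ in $\bar R$ lies in $\lfloor\mathfrak{b}\rfloor\bar R$; assuming $A\neq 0$ (the opposite case being trivial), $\bar m$ is a nonzero monomial, and the standard fact that a nonzero monomial belongs to a monomial ideal only if some monomial generator divides it yields $\nu\in E(\mathfrak{b})$ with $\{i:\nu_i>0\}\subseteq\{i:\mu_i>0\}$, giving $m\in\lfloor\mathfrak{b}\rfloor$.

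I expect the main difficulty to be spotting the right device in the reverse inclusion of (a): once one recognizes that the combinatorial condition $X^\alpha\notin\lfloor\mathfrak{a}\rfloor$ is exactly what makes the retraction $\pi_\sigma$ annihilate $\lfloor\mathfrak{a}\rfloor\bar R$, the reducedness of $\bar A[(X_i)_{i\in\sigma}]$ closes the argument, and everything in (b) reduces to formalities.
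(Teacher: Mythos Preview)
Your argument is correct. For part (a) you take a genuinely different route from the paper: the paper works directly in $R$, splits off the nilpotent terms of $f\in\sqrt{\mathfrak{a}}$, and then uses a total monomial order on $\mathbbm{Z}^{\oplus I}$ to isolate the leading term $t$ of $f$; since $t^n$ cannot be cancelled in $f^n\in\mathfrak{a}$, divisibility by some generator forces $t\in\lfloor\mathfrak{a}\rfloor$, and an induction on the number of terms finishes. Your approach instead reformulates the reverse inclusion as the statement that the squarefree monomial ideal $\lfloor\mathfrak{a}\rfloor\bar R$ is radical over the reduced ring $\bar A$, and proves this via the substitution homomorphism $\pi_\sigma$ killing the variables outside the support of an offending monomial. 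Your method is more structural (it identifies the key fact as ``squarefree monomial ideals over reduced rings are radical'') and avoids both the monomial order and the induction; the paper's method is more self-contained in that it never leaves $R$ and does not pass to a quotient. For part (b) the two arguments are essentially the same, yours phrased via reduction modulo $\nil(R)$ and the paper's via the observation that a monomial in $\nil(R)+\lfloor\mathfrak{b}\rfloor$ lies in $\lfloor\mathfrak{b}\rfloor$ unless $R=0$.
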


\begin{proof}
a) Clearly, $\nil(R)+\lfloor\mathfrak{a}\rfloor$ is contained in $\sqrt{\mathfrak{a}}$. Conversely, let $f\in\sqrt{\mathfrak{a}}$. Every term occurring in $f$ being either nilpotent or not, and the sum of the nilpotent terms being nilpotent, we can write $f=g+h$ with $g\in\nil(R)$ and $h\in R$ such that no term occurring in $h$ is nilpotent. So, it suffices to show $h\in\lfloor\mathfrak{a}\rfloor$. As $f,g\in\sqrt{\mathfrak{a}}$ it follows $h\in\sqrt{\mathfrak{a}}$, and hence we can suppose that no term occurring in $f$ is nilpotent. We prove now by induction on the number $r$ of terms occurring in $f$ that $f\in\lfloor\mathfrak{a}\rfloor$. If $r=0$ this is clear. Suppose that $r>0$ and that the claim holds for strictly smaller values of $r$. Furnishing $\mathbbm{Z}^{\oplus I}$ with a structure of totally ordered group (e.g., the lexicographic product with respect to some well-ordering of $I$) there occurs a term $t$ in $f$ whose $\mathbbm{Z}^{\oplus I}$-degree is the greatest of the $\mathbbm{Z}^{\oplus I}$-degrees of the terms occurring in $f$. If $n\in\mathbbm{N}$ with $f^n\in\mathfrak{a}$ then $t^n$ is a term occurring in $f^n$ whose $\mathbbm{Z}^{\oplus I}$-degree is not shared by any other term occurring in $f^n$. Hence, as $\mathfrak{a}$ is a monomial ideal, $\prod_{i\in I}X_i^{\mu_i}$ divides $t^n$ for some $\mu\in E(\mathfrak{a})$, and thus $t\in\lfloor\mathfrak{a}\rfloor\subseteq\sqrt{\mathfrak{a}}$. We get $f'\dfgl f-t\in\sqrt{\mathfrak{a}}$. As the number of terms occurring in $f'$ is strictly smaller than $r$ it follows $f'\in\lfloor\mathfrak{a}\rfloor$, hence $f=f'+t\in\lfloor\mathfrak{a}\rfloor$ as desired.

b) Suppose that $\sqrt{\mathfrak{a}}=\sqrt{\mathfrak{b}}$ and therefore $\lfloor\mathfrak{a}\rfloor\subseteq\nil(R)+\lfloor\mathfrak{b}\rfloor$ by a). For $\mu\in E(\mathfrak{a})$ it follows $\prod_{i\in I,\mu_i>0}X_i\in\lfloor\mathfrak{a}\rfloor\subseteq\nil(R)+\lfloor\mathfrak{b}\rfloor$, hence either $\prod_{i\in I,\mu_i>0}X_i\in\nil(R)$ and thus $R=0$, or $\prod_{i\in I,\mu_i>0}X_i\in\lfloor\mathfrak{b}\rfloor$. So, by reasons of symmetry we get $\lfloor\mathfrak{a}\rfloor=\lfloor\mathfrak{b}\rfloor$. The converse follows from a).
\end{proof}

\begin{prop}\label{2.10}
a) For a monomial ideal $\mathfrak{a}\subseteq R$ of finite type we have $\Gamma_{\mathfrak{a}}=\Gamma_{\lfloor\mathfrak{a}\rfloor}$.

b) If $\mathfrak{a},\mathfrak{b}\subseteq R$ are monomial ideals of finite type, then the following statements are equivalent: (i) $\Gamma_{\mathfrak{a}}=\Gamma_{\mathfrak{b}}$; (ii) $\lfloor\mathfrak{a}\rfloor=\lfloor\mathfrak{b}\rfloor$; (iii) $\sqrt{\mathfrak{a}}=\sqrt{\mathfrak{b}}$.
\end{prop}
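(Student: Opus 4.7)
The plan is to reduce part (a) to showing that the $\mathfrak{a}$-adic and $\lfloor\mathfrak{a}\rfloor$-adic topologies on $R$ coincide, so that Corollary \ref{1.20} yields $\Gamma_{\mathfrak{a}}=\Gamma_{\lfloor\mathfrak{a}\rfloor}$; then derive part (b) by combining (a) with Corollary \ref{1.30}~b) and Lemma \ref{2.05}~b).

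For (a), one inclusion of topologies is free: since $\prod_i X_i^{\mu_i}$ lies in the principal ideal generated by $\prod_{i,\mu_i>0}X_i$, one has $\mathfrak{a}\subseteq\lfloor\mathfrak{a}\rfloor$, whence $\mathfrak{a}^n\subseteq\lfloor\mathfrak{a}\rfloor^n$ for all $n$. For the reverse, the idea is to exhibit a single $N\in\mathbbm{N}$ with $\lfloor\mathfrak{a}\rfloor^N\subseteq\mathfrak{a}$. Because $\mathfrak{a}$ is of finite type, $E(\mathfrak{a})=\{\mu^{(1)},\dots,\mu^{(r)}\}$ is finite; the associated generators of $\lfloor\mathfrak{a}\rfloor$ are $f_j\dfgl\prod_{i,\mu^{(j)}_i>0}X_i$, and for $n_j\dfgl\max_i\mu^{(j)}_i$ the monomial $f_j^{n_j}$ is divisible by $\prod_i X_i^{\mu^{(j)}_i}$, hence lies in $\mathfrak{a}$. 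Setting $n\dfgl\max_j n_j$, every generator of $\lfloor\mathfrak{a}\rfloor^{rn}$ is an $rn$-fold product of the $f_j$'s; by pigeonhole some $f_j$ appears at least $n$ times, so the product is a multiple of $f_j^n\in\mathfrak{a}$. Hence $\lfloor\mathfrak{a}\rfloor^{rn}\subseteq\mathfrak{a}$, and the two adic topologies agree.

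For (b) the implications close into a cycle. The implication (i)$\Rightarrow$(iii) is immediate from Corollary \ref{1.30}~b) and requires no hypothesis on $\mathfrak{a},\mathfrak{b}$. The equivalence (iii)$\Leftrightarrow$(ii) is precisely Lemma \ref{2.05}~b). Finally (ii)$\Rightarrow$(i) is obtained by applying part (a) twice to the monomial ideals $\mathfrak{a}$ and $\mathfrak{b}$, yielding $\Gamma_{\mathfrak{a}}=\Gamma_{\lfloor\mathfrak{a}\rfloor}=\Gamma_{\lfloor\mathfrak{b}\rfloor}=\Gamma_{\mathfrak{b}}$.

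I expect the finite-type pigeonhole step to be the only non-cosmetic point: it is precisely where the hypothesis ``of finite type'' enters, since without it the exponents $n_j$ need not be globally bounded and no uniform $N$ with $\lfloor\mathfrak{a}\rfloor^N\subseteq\mathfrak{a}$ can be guaranteed. Everything else is formal bookkeeping with the preceding results.
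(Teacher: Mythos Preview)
Your proposal is correct and follows essentially the same approach as the paper: for (a) you establish $\mathfrak{a}\subseteq\lfloor\mathfrak{a}\rfloor$ and then find $N$ with $\lfloor\mathfrak{a}\rfloor^N\subseteq\mathfrak{a}$ to conclude via Corollary~\ref{1.20}, and for (b) you cycle through the implications exactly as the paper does using \ref{1.30}~b), \ref{2.05}~b), and part~(a). The only cosmetic difference is that the paper invokes Lemma~\ref{2.04} to assert that $\lfloor\mathfrak{a}\rfloor$ is of finite type and then leaves the existence of $N$ implicit, whereas you inline the finiteness of $E(\mathfrak{a})$ and make the pigeonhole step explicit; both amount to the same argument.
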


\begin{proof}
a) For $\mu\in E(\mathfrak{a})$ and $m\in\mathbbm{N}$ with $m\geq\mu_i$ for every $i\in I$ we have $(\prod_{i\in I,\mu_i>0}X_i)^m\in\mathfrak{a}$. As $\lfloor\mathfrak{a}\rfloor$ is of finite type by \ref{2.04} this implies that there exists $m\in\mathbbm{N}$ with $\lfloor\mathfrak{a}\rfloor^m\subseteq\mathfrak{a}$. Hence, as $\mathfrak{a}\subseteq\lfloor\mathfrak{a}\rfloor$ the claim follows from \ref{1.20}.

b) (i) implies (iii) by \ref{1.30}~b), (iii) implies (ii) by \ref{2.05}~b), and (ii) implies (i) by a).
\end{proof}

This answers our original question in a satisfying way. But since the radical of a monomial ideal of finite type is not necessarily a monomial ideal we cannot infer that $\Gamma_{\mathfrak{a}}=\Gamma_{\sqrt{\mathfrak{a}}}$. Moreover, in case of monomial ideals that are not of finite type \ref{2.10}~b) is no longer true. Let us provide some examples of good and bad behaviour.

\begin{exas}
Let $I=\mathbbm{N}_{>0}$ and $\psi={\rm Id}_{\mathbbm{Z}^{\oplus I}}$, let $\mathfrak{a}\subseteq R$ be a monomial ideal of finite type, and let $\mathfrak{b}\dfgl\langle X_i^i\mid i\in I\rangle_R$. If $A=\mathbbm{Q}[(Y_k)_{k\in\mathbbm{N}}]/\langle Y_k^2\mid k\in I\rangle$ then we have $\Gamma_{\mathfrak{a}}=\Gamma_{\lfloor\mathfrak{a}\rfloor}=\Gamma_{\sqrt{\mathfrak{a}}}$ and $\Gamma_{\mathfrak{b}}\neq\Gamma_{\lfloor\mathfrak{b}\rfloor}=\Gamma_{\sqrt{\mathfrak{b}}}$. If $A=\mathbbm{Q}[(Y_k)_{k\in\mathbbm{N}}]/\langle Y_k^k\mid k\in I\rangle$ then we have $\Gamma_{\mathfrak{a}}=\Gamma_{\lfloor\mathfrak{a}\rfloor}\neq\Gamma_{\sqrt{\mathfrak{a}}}$ and $\Gamma_{\mathfrak{b}}\neq\Gamma_{\lfloor\mathfrak{b}\rfloor}\neq\Gamma_{\sqrt{\mathfrak{b}}}\neq\Gamma_{\mathfrak{b}}$. 
\end{exas}

These examples already hint at the condition on the base ring for $\Gamma_{\lfloor\mathfrak{a}\rfloor}$ and $\Gamma_{\sqrt{\mathfrak{a}}}$ (and in case $\mathfrak{a}$ is of finite type $\Gamma_{\mathfrak{a}}$ and $\Gamma_{\sqrt{\mathfrak{a}}}$) to coincide, as presented in the next result.

\begin{prop}
The following statements are equivalent: (i) $\nil(A)$ is nilpotent; (ii) For every monomial ideal $\mathfrak{a}\subseteq R$ we have $\Gamma_{\lfloor\mathfrak{a}\rfloor}=\Gamma_{\sqrt{\mathfrak{a}}}$; (iii) There exists a monomial ideal $\mathfrak{a}\subseteq R$ with $\Gamma_{\lfloor\mathfrak{a}\rfloor}=\Gamma_{\sqrt{\mathfrak{a}}}$.
\end{prop}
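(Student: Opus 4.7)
The plan is to close the cycle (i) $\Rightarrow$ (ii) $\Rightarrow$ (iii) $\Rightarrow$ (i). The key ingredients are Lemma~\ref{2.05}(a), which gives $\sqrt{\mathfrak{a}} = \nil(R) + \lfloor\mathfrak{a}\rfloor$ for any monomial ideal $\mathfrak{a}$; the identity $\nil(R) = \nil(A)R$, so nilpotency of $\nil(R)$ and of $\nil(A)$ are equivalent; and Corollary~\ref{1.20}, which identifies equality of torsion functors with equality of the associated adic topologies.

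For (i) $\Rightarrow$ (ii) I would assume $\nil(R)^N = 0$ and show, for any monomial ideal $\mathfrak{a}$ and any $n \in \mathbbm{N}$, that $\sqrt{\mathfrak{a}}^{n+N-1} \subseteq \lfloor\mathfrak{a}\rfloor^n$. Expanding $(\nil(R) + \lfloor\mathfrak{a}\rfloor)^{n+N-1}$ as a sum of products of $n+N-1$ factors drawn from $\nil(R) \cup \lfloor\mathfrak{a}\rfloor$, any product using at least $N$ factors from $\nil(R)$ vanishes, while any other uses at most $N-1$ such factors and hence at least $n$ factors from $\lfloor\mathfrak{a}\rfloor$, landing it in $\lfloor\mathfrak{a}\rfloor^n$. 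Together with the trivial inclusion $\lfloor\mathfrak{a}\rfloor^n \subseteq \sqrt{\mathfrak{a}}^n$ this shows that the $\lfloor\mathfrak{a}\rfloor$-adic and $\sqrt{\mathfrak{a}}$-adic topologies coincide, so Corollary~\ref{1.20} yields $\Gamma_{\lfloor\mathfrak{a}\rfloor} = \Gamma_{\sqrt{\mathfrak{a}}}$.

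The implication (ii) $\Rightarrow$ (iii) is immediate. For (iii) $\Rightarrow$ (i), let $\mathfrak{a}$ be the supplied monomial ideal; I shall tacitly take $\mathfrak{a} \ne R$, since $\mathfrak{a} = R$ only yields the tautology $\Gamma_R = \Gamma_R$. Applying Corollary~\ref{1.20} with $n=1$ I would extract some $m \in \mathbbm{N}$ with $\sqrt{\mathfrak{a}}^m \subseteq \lfloor\mathfrak{a}\rfloor$, so in particular $\nil(R)^m \subseteq \lfloor\mathfrak{a}\rfloor$. The minimal generators of $\lfloor\mathfrak{a}\rfloor$ are the squarefree monomials $\prod_{i \in I,\,\mu_i > 0} X_i$ with $\mu \in E(\mathfrak{a})$, all of positive $\mathbbm{Z}^{\oplus I}$-degree, so $\lfloor\mathfrak{a}\rfloor$ lies inside the ideal of indeterminates and every element of $\lfloor\mathfrak{a}\rfloor$ has zero constant term. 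Since $\nil(R)^m = \nil(A)^m R$ contains the constant polynomials $\nil(A)^m$, this forces $\nil(A)^m = 0$.

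The main obstacle is this last step. The implications (i) $\Rightarrow$ (ii) and (ii) $\Rightarrow$ (iii) are formal once Lemma~\ref{2.05}(a) and Corollary~\ref{1.20} are invoked, whereas (iii) $\Rightarrow$ (i) requires translating the abstract containment $\nil(R)^m \subseteq \lfloor\mathfrak{a}\rfloor$ into genuine nilpotency of $\nil(A)$; this works because, for a proper monomial $\mathfrak{a}$, the squarefree generators of $\lfloor\mathfrak{a}\rfloor$ leave no room for nonzero scalars.
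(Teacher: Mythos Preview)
Your argument is correct and follows the same path as the paper's: both directions rest on Lemma~\ref{2.05}(a) and Corollary~\ref{1.20}, and your explicit expansion of $(\nil(R)+\lfloor\mathfrak{a}\rfloor)^{n+N-1}$ together with the constant-term observation are precisely what underlies the paper's terse chain $\nil(A)^n\subseteq\nil(R)^n\cap A\subseteq\lfloor\mathfrak{a}\rfloor\cap A=0$. Your remark on the edge case $\mathfrak{a}=R$ is apt; the paper's equality $\lfloor\mathfrak{a}\rfloor\cap A=0$ tacitly excludes it as well.
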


\begin{proof}
``(i)$\Rightarrow$(ii)'': Immediately by \ref{1.20} and \ref{2.05}~a). ``(iii)$\Rightarrow$(i)'': There exists $n\in\mathbbm{N}$ with $\sqrt{\mathfrak{a}}^n\subseteq\lfloor\mathfrak{a}\rfloor$ (\ref{1.20}), implying $\nil(A)^n\subseteq\nil(R)^n\cap A\subseteq\lfloor\mathfrak{a}\rfloor\cap A=0$ (\ref{2.05}~a)).
\end{proof}

Sometimes -- for example in toric geometry -- it is necessary to consider a slightly more general situation than the one above, namely monomial ideals in subrings of $R$ obtained by degree restriction to a subgroup of finite index of $G$. So, from now on let $H\subseteq G$ be a subgroup of finite index. We consider the degree restriction $S\dfgl R_{(H)}$ of $R$ to $H$, i.e., the sub-$A$-algebra $A[\prod_{i\in I}X_i^{\mu_i}\mid\mu\in\mathbbm{N}^{\oplus I}\wedge\psi(\mu)\in H]$ of $R$ generated by the monomials with degree in $H$, furnished with the $H$-graduation such that $\deg(\prod_{i\in I}X_i^{\mu_i})=\psi(\mu)$. By a \textit{monomial in $S$} we mean a monomial in $R$ contained in $S$, and by a \textit{monomial ideal of $S$} we mean an ideal of $S$ generated by monomials in $S$. Monomial ideals of $S$ are graded, but the converse is not necessarily true. For a monomial ideal $\mathfrak{a}$ of $R$ we consider the degree restriction $\mathfrak{a}_{(H)}\dfgl\mathfrak{a}\cap S$ to $H$. This is a monomial ideal of $S$, and if $\mathfrak{a}$ is of finite type then so is $\mathfrak{a}_{(H)}$ (as is seen analogously to \cite[3.1.3 a)]{quasi}). Conversely, if $\mathfrak{a}$ is a monomial ideal (of finite type) of $S$ then $\langle\mathfrak{a}\rangle_R$ is a monomial ideal (of finite type) of $R$, and we have $(\langle\mathfrak{a}\rangle_R)_{(H)}=\mathfrak{a}$ (\cite[2.2.3]{quasi}). Therefore, every monomial ideal (of finite type) of $S$ is of the form $\mathfrak{a}_{(H)}$ for some monomial ideal $\mathfrak{a}$ (of finite type) of $R$.

In this situation there are two ways of forming a ``monomial radical'' of a monomial ideal. We will show now that both yield the same torsion functors, but only one of them can be used to characterise equality of torsion functors. So, let $\mathfrak{a}\subseteq R$ be a monomial ideal. Since $H$ has finite index in $G$ it follows that for $\mu\in E(\mathfrak{a})$ there exists a minimal $m\in\mathbbm{N}$ with the property that $\prod_{i\in I,\mu_i>0}X_i^m\in\mathfrak{a}_{(H)}$, and we denote this number by $m_{\mu}$. Then, $$\textstyle\lfloor\mathfrak{a}\rfloor_H\dfgl\langle\prod_{i\in I,\mu_i>0}X_i^{m_{\mu}}\mid\mu\in E(\mathfrak{a})\rangle_S\subseteq S$$ is a monomial ideal. On the other hand, we can also consider the monomial ideal $\lfloor\mathfrak{a}\rfloor_{(H)}\subseteq S$, obtained by degree restriction from $\lfloor\mathfrak{a}\rfloor$. Clearly, both these constructions generalise the one above, corresponding to the case $H=G$.

It is readily checked that $\mathfrak{a}_{(H)}\subseteq\lfloor\mathfrak{a}\rfloor_H\subseteq\lfloor\mathfrak{a}\rfloor_{(H)}$, but none of these inclusions is necessarily an equality. For the first this is clear by considering the case $H=G$. For the second we suppose that $G=\mathbbm{Z}$, $\deg(X_0)=1$, and $H=2\mathbbm{Z}$. Then, setting $\mathfrak{a}=\langle X_0^4\rangle_R$ we get $\mathfrak{a}_{(H)}=\lfloor\mathfrak{a}\rfloor_H=\langle X_0^4\rangle_S\subsetneqq\langle X_0^2\rangle_S=\langle\mathfrak{a}\rangle_{(H)}$ (provided $A\neq 0$).

\begin{lemma}
If\/ $\mathfrak{a}\subseteq R$ is a monomial ideal, then $\mathfrak{a}$ is of finite type if and only if $\lfloor\mathfrak{a}\rfloor_H$ is so.
\end{lemma}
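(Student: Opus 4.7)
The plan is to mirror the proof of Lemma~\ref{2.04} nearly verbatim, with the squarefree monomial $\prod_{i\in I,\mu_i>0}X_i$ generating $\lfloor\mathfrak{a}\rfloor$ replaced throughout by the monomial $\prod_{i\in I,\mu_i>0}X_i^{m_\mu}$ generating $\lfloor\mathfrak{a}\rfloor_H$.

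The straightforward direction is immediate: if $\mathfrak{a}$ is of finite type then $E(\mathfrak{a})$ is finite, and so $\lfloor\mathfrak{a}\rfloor_H$ is generated by the finite family $\{\prod_{i\in I,\mu_i>0}X_i^{m_\mu}\mid\mu\in E(\mathfrak{a})\}$, hence is of finite type. For the converse I would argue by contradiction: suppose $\lfloor\mathfrak{a}\rfloor_H$ is of finite type while $E(\mathfrak{a})$ is infinite. The pigeonhole principle, applied to the pair consisting of the support $\{i\in I\mid\mu_i>0\}$ of $\mu$ and the exponent $m_\mu$, should then yield an infinite subset $F\subseteq E(\mathfrak{a})$ on which the monomial $\prod_{i\in I,\mu_i>0}X_i^{m_\mu}$ is constant. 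In particular every $\mu\in F$ has the same finite support $J\subseteq I$, so $F\subseteq\mathbbm{N}^{\oplus J}$ is an antichain in the Artinian ordered set $\mathbbm{N}^{\oplus J}$, and Dickson's lemma (Artinianness in finitely many indeterminates) forces $F$ to be finite, contradicting its infiniteness.

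The delicate step will be the pigeonhole. The finite-type hypothesis on $\lfloor\mathfrak{a}\rfloor_H$ provides a finite minimal generating set $\{g_1,\dots,g_r\}$ whose elements each have the shape $g_s=\prod_{i\in J_s}X_i^{n_s}$ for some finite $J_s\subseteq I$ and integer $n_s\geq 1$; from this one obtains a function $E(\mathfrak{a})\to\{1,\dots,r\}$ assigning to each $\mu$ an index $s$ with $J_s\subseteq\{i:\mu_i>0\}$ and $n_s\leq m_\mu$. Pigeonhole on this function singles out an infinite fibre, and one then exploits the inclusion $\lfloor\mathfrak{a}\rfloor_H\subseteq\mathfrak{a}$ (which forces $g_s$ itself to be divisible by some $X^{\nu_s}$ with $\nu_s\in E(\mathfrak{a})$) together with the minimality of the elements of $E(\mathfrak{a})$ to pin down both the support and the exponent of $\mu$ along a further infinite subfamily. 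This is the direct analogue of the step in~\ref{2.04} that extracted an infinite subfamily of $E(\mathfrak{a})$ on which the squarefree part was constant, only with the additional bookkeeping for the integer $m_\mu$.
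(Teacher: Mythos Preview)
Your approach is exactly what the paper does: its entire proof reads ``Analogously to~\ref{2.04}.'' Your first two paragraphs capture precisely this analogy, replacing the squarefree generator $\prod_{i,\mu_i>0}X_i$ by $\prod_{i,\mu_i>0}X_i^{m_\mu}$, and the additional bookkeeping you sketch for the exponent $m_\mu$ in the pigeonhole step is the natural adaptation the paper leaves implicit.
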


\begin{proof}
Analogously to \ref{2.04}.
\end{proof}

\begin{prop}\label{2.30}
For a monomial ideal $\mathfrak{a}\subseteq R$ of finite type we have $$\Gamma_{\mathfrak{a}_{(H)}}=\Gamma_{\lfloor\mathfrak{a}\rfloor_H}=\Gamma_{\lfloor\mathfrak{a}\rfloor_{(H)}}.$$
\end{prop}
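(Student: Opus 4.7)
The plan is to reduce the statement to a comparison of adic topologies by invoking Corollary \ref{1.20}, this time applied to the $H$-graded ring $S$ rather than to $R$. The inclusions $\mathfrak{a}_{(H)}\subseteq\lfloor\mathfrak{a}\rfloor_H\subseteq\lfloor\mathfrak{a}\rfloor_{(H)}$ already established right before the statement immediately give that the $\lfloor\mathfrak{a}\rfloor_{(H)}$-adic topology on $S$ is coarser than the $\lfloor\mathfrak{a}\rfloor_H$-adic topology, which in turn is coarser than the $\mathfrak{a}_{(H)}$-adic topology. So the only thing left to prove is the reverse comparison, and for this it suffices to exhibit a single $m\in\mathbbm{N}$ with
$$\lfloor\mathfrak{a}\rfloor_{(H)}^{\,m}\subseteq\mathfrak{a}_{(H)}.$$

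To produce such an $m$, I would apply Proposition \ref{2.10}~a) to the monomial ideal $\mathfrak{a}\subseteq R$ of finite type: combined with Corollary \ref{1.20} it gives an integer $m\in\mathbbm{N}$ such that $\lfloor\mathfrak{a}\rfloor^m\subseteq\mathfrak{a}$ inside $R$. Now I would intersect everything with the subring $S$: since $S$ is closed under multiplication, a product of $m$ elements of $\lfloor\mathfrak{a}\rfloor\cap S$ lies in $\lfloor\mathfrak{a}\rfloor^m\cap S$, so
$$\lfloor\mathfrak{a}\rfloor_{(H)}^{\,m}=(\lfloor\mathfrak{a}\rfloor\cap S)^m\subseteq\lfloor\mathfrak{a}\rfloor^m\cap S\subseteq\mathfrak{a}\cap S=\mathfrak{a}_{(H)},$$
which is exactly what is needed.

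Putting the two directions together, the $\mathfrak{a}_{(H)}$-adic, $\lfloor\mathfrak{a}\rfloor_H$-adic and $\lfloor\mathfrak{a}\rfloor_{(H)}$-adic topologies on $S$ all coincide, and Corollary \ref{1.20} applied to $S$ (with its $H$-graduation) yields the equality of the three torsion functors.

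There is no serious obstacle: the whole argument relies on the observation that the uniform exponent $m$ witnessing $\lfloor\mathfrak{a}\rfloor^m\subseteq\mathfrak{a}$ in $R$ can be transported to $S$ by mere intersection, because raising an intersection to a power still lands inside the intersection of the power. The only point that deserves a word is that Corollary \ref{1.20} is legitimately applicable to $S$, which it is since the framework of Section 1 was set up for an arbitrary graded ring and $S$ is such a ring for the group $H$.
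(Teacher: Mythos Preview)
Your argument is correct and follows the same overall strategy as the paper: establish the containment $\lfloor\mathfrak{a}\rfloor_{(H)}^{\,m}\subseteq\mathfrak{a}_{(H)}$ for some $m$, combine it with the known chain $\mathfrak{a}_{(H)}\subseteq\lfloor\mathfrak{a}\rfloor_H\subseteq\lfloor\mathfrak{a}\rfloor_{(H)}$, and conclude via Corollary~\ref{1.20}. The only difference is in how the containment is obtained. The paper argues directly in $S$: it takes a monomial $t\in\lfloor\mathfrak{a}\rfloor_{(H)}$, writes it explicitly in terms of some $\mu\in E(\mathfrak{a})$, and observes that a suitable power $t^n$ is divisible by $\prod_{i}X_i^{\mu_i}$, hence lies in $\mathfrak{a}\cap S$; finiteness of $\lfloor\mathfrak{a}\rfloor_{(H)}$ (Lemma~\ref{2.04}) then yields a uniform exponent. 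You instead pull the uniform exponent from Proposition~\ref{2.10}~a) in $R$ and push it down to $S$ by the elementary inclusion $(\lfloor\mathfrak{a}\rfloor\cap S)^m\subseteq\lfloor\mathfrak{a}\rfloor^m\cap S$. Your route is a bit slicker in that it reuses \ref{2.10}~a) wholesale rather than reproducing its monomial computation, while the paper's route is more self-contained at this spot; substantively they are the same proof.
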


\begin{proof}
Let $t\in\lfloor\mathfrak{a}\rfloor_{(H)}$ be a monomial. Then, there exist $\mu\in E(\mathfrak{a})$ and $\nu\in\mathbbm{N}^{\oplus I}$ with $t=\prod_{i\in I,\mu_i>0}X_i^{1+\nu_i}\cdot\prod_{i\in I,\mu_i=0}X_i^{\nu_i}$, and there exists $n\in\mathbbm{N}$ such that for $i\in I$ with $\mu_i>0$ we have $n(1+\nu_i)\geq\mu_i$. This implies that $t^n$ is a multiple of $\prod_{i\in I}X_i^{\mu_i}$ and therefore belongs to $\mathfrak{a}$. Since $\lfloor\mathfrak{a}\rfloor_{(H)}\subseteq S$ is by \ref{2.04} a monomial ideal of finite type, the claim follows from \ref{1.20}.
\end{proof}

\begin{prop}\label{2.40}
Let $\mathfrak{a},\mathfrak{b}\subseteq R$ be monomial ideals of finite type.

a) The following statements are equivalent: (i) $\Gamma_{\mathfrak{a}_{(H)}}=\Gamma_{\mathfrak{b}_{(H)}}$; (ii) $\sqrt{\mathfrak{a}}=\sqrt{\mathfrak{b}}$; (iii) $\lfloor\mathfrak{a}\rfloor=\lfloor\mathfrak{b}\rfloor$; (iv) $\lfloor\mathfrak{a}\rfloor_{(H)}=\lfloor\mathfrak{b}\rfloor_{(H)}$.

b) If\/ $\lfloor\mathfrak{a}\rfloor_H=\lfloor\mathfrak{b}\rfloor_H$ then $\Gamma_{\mathfrak{a}_{(H)}}=\Gamma_{\mathfrak{b}_{(H)}}$.
\end{prop}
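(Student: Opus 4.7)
My plan for part (a) is to close the loop $(i)\Leftrightarrow(iii)\Leftrightarrow(ii)$ and $(iii)\Leftrightarrow(iv)$. The equivalence $(ii)\Leftrightarrow(iii)$ is exactly Lemma~\ref{2.05}~b); $(iii)\Rightarrow(iv)$ follows by intersecting both sides with $S$; and $(iv)\Rightarrow(i)$ is immediate from Proposition~\ref{2.30} applied to $\mathfrak{a}$ and $\mathfrak{b}$. What remains, and what carries the real content, is the implication $(i)\Rightarrow(iii)$.

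To handle $(i)\Rightarrow(iii)$, I first invoke Corollary~\ref{1.30}~b) to obtain $\sqrt{\mathfrak{a}_{(H)}}=\sqrt{\mathfrak{b}_{(H)}}$, and since taking radicals commutes with intersection with a subring, this reads $\sqrt{\mathfrak{a}}\cap S=\sqrt{\mathfrak{b}}\cap S$. To recover information about $\lfloor\mathfrak{a}\rfloor$ I fix $\mu\in E(\mathfrak{a})$ and set $t=\prod_{\mu_i>0}X_i$; since $N=[G:H]$ is finite, we have $N\cdot\deg(t)\in H$ and hence $t^N\in S$. As $t\in\lfloor\mathfrak{a}\rfloor\subseteq\sqrt{\mathfrak{a}}$, also $t^N\in\sqrt{\mathfrak{a}}\cap S=\sqrt{\mathfrak{b}}\cap S\subseteq\sqrt{\mathfrak{b}}$, so $t\in\sqrt{\mathfrak{b}}$. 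A sufficiently high power of the monomial $t$ then lies in the monomial ideal $\mathfrak{b}$ and (as already used in the proof of Lemma~\ref{2.05}~a)) must be divisible by some generator $\prod_{j\in I}X_j^{\nu_j}$ with $\nu\in E(\mathfrak{b})$; comparing exponents forces $\nu_j>0\Rightarrow\mu_j>0$, so $\prod_{\nu_j>0}X_j$ divides $t$ and $t\in\lfloor\mathfrak{b}\rfloor$. Thus every generator of $\lfloor\mathfrak{a}\rfloor$ lies in $\lfloor\mathfrak{b}\rfloor$, giving $\lfloor\mathfrak{a}\rfloor\subseteq\lfloor\mathfrak{b}\rfloor$; symmetry yields equality.

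Part (b) requires nothing beyond applying Proposition~\ref{2.30} twice: $\Gamma_{\mathfrak{a}_{(H)}}=\Gamma_{\lfloor\mathfrak{a}\rfloor_H}=\Gamma_{\lfloor\mathfrak{b}\rfloor_H}=\Gamma_{\mathfrak{b}_{(H)}}$, the middle equality being the hypothesis. The main obstacle throughout is the pullback $(i)\Rightarrow(iii)$: one must transfer information about a radical in $S$ back to the monomial structure of ideals in $R$, and the finiteness of $[G:H]$ is precisely what enables this, allowing the non-$S$ monomial $t$ to be replaced by a power of it lying in $S$ without leaving the radical that we control.
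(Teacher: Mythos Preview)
Your argument for part~(a) is correct and follows essentially the same route as the paper: reduce everything to the implication $(i)\Rightarrow(iii)$, pass from equality of torsion functors to equality of radicals in $S$ via \ref{1.30}, use the finite index of $H$ in $G$ to land a power of the squarefree monomial $t$ inside $S$, and conclude by monomial divisibility. The only cosmetic difference is that the paper routes through $\sqrt{\lfloor\mathfrak{a}\rfloor_H}=\sqrt{\lfloor\mathfrak{b}\rfloor_H}$ (via \ref{2.30}) rather than through $\sqrt{\mathfrak{a}}\cap S=\sqrt{\mathfrak{b}}\cap S$ directly, but the underlying mechanism is identical.

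For part~(b) your argument is actually cleaner than the paper's. The paper observes that the hypothesis yields $\sqrt{\lfloor\mathfrak{a}\rfloor_H}=\sqrt{\lfloor\mathfrak{b}\rfloor_H}$ and then re-runs the $(i)\Rightarrow(iii)$ argument from part~(a) to deduce $\lfloor\mathfrak{a}\rfloor=\lfloor\mathfrak{b}\rfloor$, whence the claim via the already-established equivalences. You instead simply chain \ref{2.30} with the hypothesis: $\Gamma_{\mathfrak{a}_{(H)}}=\Gamma_{\lfloor\mathfrak{a}\rfloor_H}=\Gamma_{\lfloor\mathfrak{b}\rfloor_H}=\Gamma_{\mathfrak{b}_{(H)}}$. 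This is more direct and avoids repeating the combinatorial step; the paper's detour is not wrong, just unnecessary once \ref{2.30} is available.
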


\begin{proof}
a) ``(i)$\Rightarrow$(iii)'': If $\Gamma_{\mathfrak{a}_{(H)}}=\Gamma_{\mathfrak{b}_{(H)}}$ then $\sqrt{\lfloor\mathfrak{a}\rfloor_H}=\sqrt{\lfloor\mathfrak{b}\rfloor_H}$ holds (\ref{2.30}, \ref{1.30}). Let $\mu\in E(\mathfrak{a})$. There exists $n\in\mathbbm{N}_{>0}$ with $\prod_{i\in I,\mu_i>0}X_i^{nm_{\mu}}\in\lfloor\mathfrak{b}\rfloor_H$, so there exists $\nu\in E(\mathfrak{b})$ such that $\prod_{i\in I,\nu_i>0}X_i^{m_{\nu}}$ divides $\prod_{i\in I,\mu_i>0}X_i^{nm_{\mu}}$. If $m_{\nu}=0$ or $m_{\mu}=0$ then we get $\mathfrak{a}=R=\mathfrak{b}$ and thus the claim. Otherwise, $\prod_{i\in I,\nu_i>0}X_i$ divides $\prod_{i\in I,\mu_i>0}X_i$, implying $\prod_{i\in I,\mu_i>0}X_i\in\lfloor\mathfrak{b}\rfloor$ and thus by symmetry the claim. The remaining implications follow trivially or by \ref{2.10}~b) and \ref{2.30}.

b) The hypothesis implies $\sqrt{\lfloor\mathfrak{a}\rfloor_H}=\sqrt{\lfloor\mathfrak{b}\rfloor_H}$, and the claim follows with the same argument as the implication ``(i)$\Rightarrow$(iii)'' in a).
\end{proof}

The converse of \ref{2.40}~b) is not true. Indeed, supposing again that $G=\mathbbm{Z}$, $\deg(X_0)=1$, and $H=2\mathbbm{Z}$, and setting $\mathfrak{a}=\langle X_0^4\rangle_R$ and $\mathfrak{b}=\langle X_0^2\rangle_R$, we get $\lfloor\mathfrak{a}\rfloor=\langle X_0\rangle_R=\lfloor\mathfrak{b}\rfloor$, but $\lfloor\mathfrak{a}\rfloor_H=\langle X_0^4\rangle_S\subsetneqq\langle X_0^2\rangle_S=\lfloor\mathfrak{b}\rfloor_H$ (provided $A\neq 0$).

%%%%%%%%%%%%%%%%%%%%%%%%%%%%%%%%%%%%%%%%%%%%%%%%%%%%%%%%%%%%%%%%%%%%%%%%%%%

\section{Application: Flatness of quasicoherent sheaves on Cox schemes}

In this last section we will apply our results to toric geometry. For the necessary background and unexplained notations we refer the reader to \cite{quasi}.

Before recalling the toric setting we have to make some general remarks on flatness and \v{C}ech cohomology. During these, let $G$ be a commutative group, let $A$ be a ring, and let $R$ be a $G$-graded ring such that $R_0$ is an $A$-algebra. For a $G$-graded $R$-module $M$ we set $\degsupp(M)\dfgl\{g\in G\mid M_g\neq 0\}$. For a finite sequence ${\bf a}=(a_j)_{j=1}^n$ in $R^{\hom}$ we denote by $C({\bf a},M)$ the $G$-graded \v{C}ech cocomplex of $M$ with respect to ${\bf a}$, by $C({\bf a},M)^i$ its $i$-th component for $i\in\mathbbm{Z}$, and by $H^i({\bf a},M)$ the $i$-th $G$-graded \v{C}ech cohomology of $M$ with respect to ${\bf a}$ for $i\in\mathbbm{Z}$. Furthermore, for a graded ideal $\mathfrak{a}\subseteq R$ and $i\in\mathbbm{Z}$ we denote by $H_{\mathfrak{a}}^i(M)$ the $i$-th $G$-graded local cohomology of $M$ with respect to $\mathfrak{a}$. One should note that if $R$ is not Noetherian then \v{C}ech cohomology with respect to ${\bf a}$ and local cohomology with respect to the ideal of $R$ generated by ${\bf a}$ do not necessarily coincide.

\begin{lemma}\label{3.20}
Let $R'$ be a flat $G$-graded $R$-algebra, and let $M$ be a $G$-graded $R$-module. If $M$ is flat over $A$, then so is $R'\otimes_RM$.
\end{lemma}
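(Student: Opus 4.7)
The plan is to prove flatness of $R'\otimes_RM$ over $A$ by realising the functor $(R'\otimes_RM)\otimes_A-$ as a composition of two exact functors, namely $M\otimes_A-$ (from the category of $A$-modules to the category of $G$-graded $R$-modules) followed by $R'\otimes_R-$ (from $G$-graded $R$-modules to $G$-graded $R'$-modules). Both of these are exact by hypothesis: the first because $M$ is $A$-flat, the second because $R'$ is $R$-flat.

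The nontrivial book-keeping is the associativity isomorphism. For any $A$-module $N$, one endows $M\otimes_AN$ with the $G$-graded $R$-module structure coming from the action of $R$ on the left tensorand $M$, where the $G$-graduation is given by $(M\otimes_AN)_g=M_g\otimes_AN$. I would then verify that the natural map
\[
R'\otimes_R(M\otimes_AN)\longrightarrow(R'\otimes_RM)\otimes_AN,\quad r'\otimes(m\otimes n)\mapsto(r'\otimes m)\otimes n,
\]
is an isomorphism of abelian groups (in fact, of $G$-graded $R'$-modules, where the $A$-action on the right is via the composite $A\to R_0\to R\to R'$). This is the standard associativity of the tensor product; the compatibility of $A$-module structures needed to state it is ensured by the hypothesis that $R_0$ is an $A$-algebra, so that the $A$-actions on $M$ and on $R'$ factor through $R$ in a compatible way.

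Given a short exact sequence $0\to N'\to N\to N''\to 0$ of $A$-modules, applying $M\otimes_A-$ yields a short exact sequence of $G$-graded $R$-modules (by $A$-flatness of $M$), and subsequently applying $R'\otimes_R-$ keeps it exact (by $R$-flatness of $R'$). Via the above associativity isomorphism this exact sequence is identified with
\[
0\to(R'\otimes_RM)\otimes_AN'\to(R'\otimes_RM)\otimes_AN\to(R'\otimes_RM)\otimes_AN''\to 0,
\]
which shows that $R'\otimes_RM$ is $A$-flat.

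I do not expect a genuine obstacle here; the only point requiring a moment's care is to make sure that the three different $A$-module structures on $R'\otimes_RM\otimes_AN$ (via $R'$, via $M$, or via $N$) agree, which they do because the $A$-algebra structure on $R_0$ is the ultimate source of every $A$-action in sight.
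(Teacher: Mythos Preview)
Your argument is correct and is essentially the same as the paper's: both realise $(R'\otimes_RM)\otimes_A\bullet$ as the composition $R'\otimes_R(M\otimes_A\bullet)$ (followed by the evident forgetful/scalar-restriction functors back to ${\sf Mod}(A)$) and observe that each factor is exact. You spell out the associativity isomorphism and the compatibility of $A$-actions more explicitly than the paper does, but the underlying idea is identical.
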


\begin{proof}
The functor $(R'\otimes_RM)\otimes_A\bullet\colon{\sf Mod}(A)\rightarrow{\sf Mod}(A)$ is the composition of $$R'\otimes_R(M\otimes_A\bullet)\colon{\sf Mod}(A)\rightarrow{\sf GrMod}^G(R')$$ with the forgetful functor from ${\sf GrMod}^G(R')$ to ${\sf Mod}(R')$ and the scalar restriction from ${\sf Mod}(R')$ to ${\sf Mod}(A)$, and all these three functors are exact. 
\end{proof}

\begin{lemma}\label{3.30}
Let ${\bf a}=(a_j)_{j=1}^n$ be a finite sequence in $R^{\hom}$, and let $M$ be a $G$-graded $R$-module. If $M_{a_j}$ is flat over $A$ for every $j\in[1,n]$, then so is $C({\bf a},M)^i_g$ for every $i\in\mathbbm{N}_{>0}$ and every $g\in G$.
\end{lemma}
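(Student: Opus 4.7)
The plan is to unwind the definition of the \v{C}ech cocomplex and reduce to showing that each localized component $M_{a_{j_1}\cdots a_{j_i}}$ is flat over $A$. Concretely, for $i\geq 1$ one has
$$C({\bf a},M)^i=\bigoplus_{1\leq j_1<\cdots<j_i\leq n}M_{a_{j_1}\cdots a_{j_i}},$$
this being a decomposition of $G$-graded $R$-modules. The degree-$g$ piece $C({\bf a},M)^i_g$ is then an $A$-module direct summand of $C({\bf a},M)^i$, hence it suffices (direct summands and direct sums of flat modules being flat) to prove flatness over $A$ of $M_{a_{j_1}\cdots a_{j_i}}$ for each strictly increasing sequence $j_1<\cdots<j_i$.

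The crucial reduction is to leverage the hypothesis on a single $a_{j_1}$, which is available precisely because $i\geq 1$ forces the index tuple to be nonempty. One identifies, as $G$-graded $R_{a_{j_1}}$-modules,
$$M_{a_{j_1}\cdots a_{j_i}}\cong R_{a_{j_1}\cdots a_{j_i}}\otimes_{R_{a_{j_1}}}M_{a_{j_1}},$$
with $R_{a_{j_1}\cdots a_{j_i}}$ a flat $G$-graded $R_{a_{j_1}}$-algebra (further localization is flat). The canonical map $R_0\to(R_{a_{j_1}})_0$ endows $(R_{a_{j_1}})_0$ with the structure of an $A$-algebra, so the ambient setup of Lemma \ref{3.20} is available after replacing the triple $(A,R,R')$ by $(A,R_{a_{j_1}},R_{a_{j_1}\cdots a_{j_i}})$. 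Applying that lemma to $M_{a_{j_1}}$, which is flat over $A$ by assumption, yields flatness over $A$ of $M_{a_{j_1}\cdots a_{j_i}}$, and the proof concludes by the direct-sum argument above.

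I do not expect a real obstacle: the only content beyond bookkeeping is to recognize that the $i\geq 1$ restriction is exactly what lets us pull out one factor $a_{j_1}$ to serve as the base change where flatness over $A$ is already known. Everything else (the explicit description of $C({\bf a},M)^i$, the identification of iterated localization with a tensor product, and the stability of flatness under direct summands and direct sums) is standard.
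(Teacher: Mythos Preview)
Your proof is correct and follows essentially the same route as the paper: the paper's proof is a one-line appeal to Lemma~\ref{3.20} together with the Bourbaki result that a direct sum is flat if and only if each summand is (\cite[I.2.3 Proposition~2]{ac}), applied to the definition of the \v{C}ech cocomplex. Your write-up simply unpacks this, including the observation that for $i\geq 1$ one may pull out a factor $a_{j_1}$ so that Lemma~\ref{3.20} applies with base ring $R_{a_{j_1}}$ and module $M_{a_{j_1}}$.
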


\begin{proof}
Using \ref{3.20} and \cite[I.2.3 Proposition 2]{ac} this follows immediately from the definition of the \v{C}ech cocomplex.
\end{proof}

\begin{prop}\label{3.40}
Let ${\bf a}=(a_j)_{j=1}^n$ be a finite sequence in $R^{\hom}$, and let $M$ be a $G$-graded $R$-module. If $M_{a_j}$ is flat over $A$ for every $j\in[1,n]$, then so is $M_g$ for every $g\in G\setminus\bigcup_{i=0}^n\degsupp(H^i({\bf a},M))$.
\end{prop}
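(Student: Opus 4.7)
The plan is to pass to the degree-$g$ strand of the Čech cocomplex, where the hypothesis turns it into an exact sequence, and then to deduce $A$-flatness of $M_g$ from \ref{3.30} by a standard dimension-shifting argument.

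First, I would note that the degree-$g$ functor $\bullet_g\colon{\sf GrMod}^G(R)\to{\sf Mod}(A)$ is exact, so the cohomology of $C({\bf a},M)^\bullet_g$ at position $i$ equals $H^i({\bf a},M)_g$. In the convention used here the Čech cocomplex has the shape $0\to M\to C({\bf a},M)^1\to\ldots\to C({\bf a},M)^n\to 0$, with $C({\bf a},M)^0=M$ and each $C({\bf a},M)^i$ for $i\ge 1$ a finite direct sum of localizations of $M$ at products $a_{j_1}\cdots a_{j_i}$. The hypothesis on $g$ therefore forces
\[
0\to M_g\to C({\bf a},M)^1_g\to C({\bf a},M)^2_g\to\ldots\to C({\bf a},M)^n_g\to 0
\]
to be an exact sequence of $A$-modules, all of whose terms except possibly $M_g$ are $A$-flat by \ref{3.30}.

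Next, I would split this sequence into short exact sequences $0\to K_{i-1}\to C({\bf a},M)^i_g\to K_i\to 0$ for $i=1,\ldots,n$, setting $K_0\dfgl M_g$, taking $K_i$ to be the image of the $i$-th differential for $1\le i\le n-1$, and $K_n\dfgl 0$. Then $K_{n-1}\cong C({\bf a},M)^n_g$ is $A$-flat, and by descending induction on $i$ the long exact Tor sequence
\[
\operatorname{Tor}^A_{j+1}(K_i,X)\to\operatorname{Tor}^A_j(K_{i-1},X)\to\operatorname{Tor}^A_j(C({\bf a},M)^i_g,X)
\]
has vanishing outer terms for every $A$-module $X$ and every $j\ge 1$ (by the induction hypothesis and \ref{3.30}, respectively); hence $K_{i-1}$ is $A$-flat, and in particular $K_0=M_g$ is $A$-flat.

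The main (mild) obstacle is simply to carry out the homological bookkeeping of the previous paragraph; the substance of the proof is the standard fact that a module admitting a finite coresolution by $A$-flat modules is itself $A$-flat, applied to the coresolution furnished by \ref{3.30} and the vanishing of $H^i({\bf a},M)_g$.
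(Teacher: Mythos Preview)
Your argument is correct and follows the same route as the paper: take the degree-$g$ strand of the \v{C}ech cocomplex, use the vanishing of $H^i({\bf a},M)_g$ to obtain an exact sequence of $A$-modules, invoke \ref{3.30} for flatness of the terms in positions $\ge 1$, and conclude by descending induction along the resulting short exact sequences. The only cosmetic difference is that the paper replaces your explicit Tor computation by a reference to \cite[I.2.5 Proposition 5]{ac} (in a short exact sequence with flat middle and right term the left term is flat), which is precisely the fact your Tor argument reproves.
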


\begin{proof}
Taking components of degree $g\in G\setminus\bigcup_{i=0}^n\degsupp(H^i({\bf a},M))$ of $C({\bf a},M)$ yields an exact sequence $$0\rightarrow C({\bf a},M)^0_g\rightarrow C({\bf a},M)^1_g\rightarrow\cdots\rightarrow C({\bf a},M)^{n-1}_g\rightarrow C({\bf a},M)^n_g\rightarrow 0$$ of $A$-modules. Using \ref{3.30} and \cite[I.2.5 Proposition 5]{ac} the claim follows then by induction on $n$.
\end{proof}

Now we are ready to consider the toric setting. Let $V$ be an $\mathbbm{R}$-vector space of finite dimension, let $N$ be a $\mathbbm{Z}$-structure on $V$, let $\Sigma$ be an $N$-fan in $V$, let $\Sigma_1$ denote the set of $1$-dimensional cones in $\Sigma$, and let $R$ be a ring. The combinatorics of $\Sigma$ give rise to a group $A$ and an epimorphism $a\colon\mathbbm{Z}^{\Sigma_1}\twoheadrightarrow A$, as well as to a subgroup $\pic(\Sigma)\subseteq A$, the so-called Picard group of $\Sigma$. Additionally, we choose a subgroup $B\subseteq A$ of finite index. The Cox ring $S$ associated with $\Sigma_1$ over $R$ is defined as the polynomial algebra $R[(Z_{\rho})_{\rho\in\Sigma_1}]$ furnished with the $A$-graduation derived from its canonical $\mathbbm{Z}^{\Sigma_1}$-graduation by means of $a$, and the $B$-restricted Cox ring $S_B$ is defined as its degree restriction to $B$. For $\sigma\in\Sigma$ we define a monomial $\widehat{Z}_{\sigma}\dfgl\prod_{\rho\in\Sigma_1\setminus\sigma}Z_{\rho}\in S$. For $\sigma\in\Sigma$ there exists a minimal $m\in\mathbbm{N}$ with the property that $\widehat{Z}_{\sigma}^m\in S_B$, and we denote this number by $m_{\sigma}$. We denote the ring of fractions obtained from $S_B$ by inverting $\widehat{Z}_{\sigma}^{m_{\sigma}}$ by $S_{B,\sigma}$. Its component of degree $0$ is independent of the choice of $B$ and is denoted by $S_{(\sigma)}$. The facial relation in $\Sigma$ allows us to glue the $R$-schemes $\spec(S_{(\sigma)})$ to obtain an $R$-scheme $Y\dfgl Y_{\Sigma}(R)$, called the Cox scheme associated with $\Sigma$ over $R$. There is a canonical morphism of $R$-schemes from $Y$ to the toric scheme associated with $\Sigma$ over $R$, which is an isomorphism if and only if $\Sigma$ is full, i.e., $\langle\bigcup\Sigma\rangle_{\mathbbm{R}}=V$. Hence, studying Cox schemes is essentially the same as studying toric schemes.

There is an essentially surjective functor $\mathscr{S}_B$ from the category of $B$-graded $S_B$-modules to the category of quasicoherent $\mathscr{O}_Y$-modules, allowing to translate the study of quasicoherent sheaves on Cox schemes (and hence on toric schemes) into the study of $B$-graded $S_B$-modules. For example, in \cite[4.2.7]{quasi} it was shown that if $F$ is a $B$-graded $S_B$-module such that $S_{B,\sigma}\otimes_{S_B}F$ is flat over $R$ for every $\sigma\in\Sigma$, then $\mathscr{S}_B(F)$ is flat over $R$, and the converse holds if $B\subseteq\pic(\Sigma)$ (in which case $\Sigma$ is simplicial by \cite[1.4.5]{quasi}). Thus, after choosing a counting $\widehat{Z}$ of $(\widehat{Z}_{\sigma}^{m_{\sigma}})_{\sigma\in\Sigma}$ we can relate flatness of $\mathscr{S}_B(F)$ with \v{C}ech cohomology of $F$.

\begin{prop}\label{3.50}
Suppose that $B\subseteq\pic(\Sigma)$, and let $F$ be a $B$-graded $S_B$-module. If $\mathscr{S}_B(F)$ is flat over $R$, then $F_{\alpha}$ is flat over $R$ for every $\alpha\in B\setminus\bigcup_{i\in\mathbbm{N}}\degsupp(H^i(\widehat{Z},F))$.
\end{prop}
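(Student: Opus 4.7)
The plan is to deduce the result by combining the preceding proposition with the converse direction of the known criterion for flatness of $\mathscr{S}_B(F)$. First I would exploit the hypothesis $B \subseteq \pic(\Sigma)$ to invoke \cite[4.2.7]{quasi}: its converse part asserts that flatness of $\mathscr{S}_B(F)$ over $R$ forces $S_{B,\sigma}\otimes_{S_B}F$ to be flat over $R$ for every $\sigma\in\Sigma$. By the very definition of $S_{B,\sigma}$ as the localization of $S_B$ at $\widehat{Z}_\sigma^{m_\sigma}$, this says precisely that $F_{\widehat{Z}_\sigma^{m_\sigma}}$ is flat over $R$ for each $\sigma$; in other words, the localization of $F$ at each entry of the sequence $\widehat{Z}$ is flat over $R$.

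Next I would apply Proposition \ref{3.40} with the following identifications: the group $G$ of loc.\ cit.\ becomes $B$, the base ring $A$ of loc.\ cit.\ becomes the ring $R$ of the present proposition, the $G$-graded ring of loc.\ cit.\ is $S_B$ (for which $(S_B)_0=R$ is tautologically an $R$-algebra), the module $M$ is $F$, and the sequence ${\bf a}$ is $\widehat{Z}$. The hypothesis of Proposition \ref{3.40} is precisely what the previous step delivers, so its conclusion yields flatness of $F_\alpha$ over $R$ for every $\alpha\in B$ outside $\bigcup_{i=0}^{n}\degsupp(H^i(\widehat{Z},F))$, where $n$ denotes the length of $\widehat{Z}$. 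Since $H^i(\widehat{Z},F)=0$ for $i>n$ (the \v{C}ech cocomplex being concentrated in degrees $0,\ldots,n$), this finite union coincides with $\bigcup_{i\in\mathbbm{N}}\degsupp(H^i(\widehat{Z},F))$, which is the exceptional set appearing in the statement.

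There is no serious obstacle: the argument is essentially a two-step assembly, requiring only that one correctly match the notation between Proposition \ref{3.40} and the toric setup and then cite the already established equivalence from \cite[4.2.7]{quasi}. The entire substantive content of the proposition resides in those two inputs, and no further graded or combinatorial manipulation appears to be necessary.
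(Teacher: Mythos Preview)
Your proposal is correct and follows exactly the approach of the paper, which simply says ``Immediately from the above and \ref{3.40}''; you have merely unpacked what ``the above'' means (the converse direction of \cite[4.2.7]{quasi}) and made explicit the identification $S_{B,\sigma}\otimes_{S_B}F\cong F_{\widehat{Z}_\sigma^{m_\sigma}}$ and the vanishing of \v{C}ech cohomology in degrees beyond the length of $\widehat{Z}$. One harmless imprecision: the equality $(S_B)_0=R$ need not hold literally, but what is required for Proposition~\ref{3.40} is only that $(S_B)_0$ be an $R$-algebra, which is automatic.
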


\begin{proof}
Immediately from the above and \ref{3.40}.
\end{proof}

The monomial ideal $I\dfgl\langle\widehat{Z}_{\sigma}\mid\sigma\in\Sigma\rangle_S$ of $S$ is called the irrelevant ideal, and its degree restriction $I_B\subseteq S_B$ to $B$ is a monomial ideal of finite type. Under the condition that $S_B$ has the ITI-property with respect to $I_B$, i.e., that $\Gamma_{I_B}$ preserves injectivity of $B$-graded $S_B$-modules, the toric Serre-Grothendieck correspondence (\cite[4.5.4]{quasi}) states that we can study sheaf cohomology on $Y$ in terms of $B$-graded local cohomology over $S_B$ with support in $I_B$. Thus, local cohomology with respect to $I_B$ seems to be an appropriate tool in our setting, and the question arises on its relation with \v{C}ech cohomology with respect to $\widehat{Z}$. Under the condition that $S_B$ has the ITI-property with respect to $\langle\widehat{Z}_{\sigma}^{m_{\sigma}}\rangle_{S_B}$ for every $\sigma\in\Sigma$, our results so far provide a satisfying answer.

\begin{cor}\label{3.60}
Suppose that $B\subseteq\pic(\Sigma)$ and that $S_B$ has the ITI-property with respect to $\langle\widehat{Z}_{\sigma}^{m_{\sigma}}\rangle_{S_B}$ for every $\sigma\in\Sigma$, and let $F$ be a $B$-graded $S_B$-module. If $\mathscr{S}_B(F)$ is flat over $R$, then $F_{\alpha}$ is flat over $R$ for every $\alpha\in B\setminus\bigcup_{i\in\mathbbm{N}}\degsupp(H^i_{I_B}(F))$.
\end{cor}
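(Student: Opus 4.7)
The plan is to reduce Corollary~\ref{3.60} to Proposition~\ref{3.50} by showing that, under the stated hypotheses, the graded \v{C}ech cohomology $H^i(\widehat{Z},F)$ and the graded local cohomology $H^i_{I_B}(F)$ are naturally isomorphic for every $i\in\mathbbm{N}$. Their degree supports then coincide, and Proposition~\ref{3.50} yields the conclusion.

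For the combinatorial step, let $\mathfrak{j}\dfgl\langle\widehat{Z}_\sigma^{m_\sigma}\mid\sigma\in\Sigma\rangle_S$ and $J\dfgl\langle\widehat{Z}_\sigma^{m_\sigma}\mid\sigma\in\Sigma\rangle_{S_B}$. Both are monomial ideals of finite type of their respective rings (since $\Sigma$ is finite), and the correspondence between monomial ideals recalled in the paragraph preceding Proposition~\ref{2.40} gives $\mathfrak{j}_{(B)}=J$. Moreover $\sqrt{\mathfrak{j}}=\sqrt{I}$: the inclusion $\mathfrak{j}\subseteq I$ is evident, and conversely each generator $\widehat{Z}_\sigma$ of $I$ lies in $\sqrt{\mathfrak{j}}$ because $\widehat{Z}_\sigma^{m_\sigma}\in\mathfrak{j}$. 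Proposition~\ref{2.40}~a) applied with $H=B$ then yields
\[
\Gamma_J\;=\;\Gamma_{\mathfrak{j}_{(B)}}\;=\;\Gamma_{I_{(B)}}\;=\;\Gamma_{I_B},
\]
and passing to right derived functors of these naturally equal subfunctors of the identity gives $H^i_J(F)\cong H^i_{I_B}(F)$ for every $i\in\mathbbm{N}$.

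The second step is the \v{C}ech-to-local-cohomology comparison $H^i(\widehat{Z},F)\cong H^i_J(F)$. This is where the ITI-hypothesis on each $\langle\widehat{Z}_\sigma^{m_\sigma}\rangle_{S_B}$ enters: since each single-generator torsion functor preserves injectivity, applying the \v{C}ech cocomplex $C(\widehat{Z},-)$ to an injective resolution of $F$ produces a bicomplex whose associated spectral sequence collapses to identify $H^i(\widehat{Z},F)$ with the right derived functors of $\Gamma_J$. This is the standard comparison under ITI (see \cite{quasi}).

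Combining the two isomorphisms yields $\bigcup_{i\in\mathbbm{N}}\degsupp(H^i(\widehat{Z},F))=\bigcup_{i\in\mathbbm{N}}\degsupp(H^i_{I_B}(F))$, whence Proposition~\ref{3.50} finishes the proof. The main technical obstacle is the \v{C}ech-to-local-cohomology identification: as stressed just before Lemma~\ref{3.20}, the two cohomologies need not agree over non-Noetherian rings, and the ITI-hypothesis is precisely the condition that guarantees the comparison in full generality. The first step, by contrast, is a direct consequence of Proposition~\ref{2.40}~a) once one observes that $\mathfrak{j}$ and $I$ have the same radical.
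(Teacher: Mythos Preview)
Your proof is correct and follows essentially the same strategy as the paper: reduce to Proposition~\ref{3.50} by showing that \v{C}ech cohomology with respect to $\widehat{Z}$ agrees with local cohomology with respect to $I_B$, splitting this into (a) the equality $\Gamma_J=\Gamma_{I_B}$ and (b) the \v{C}ech-to-local comparison under the ITI hypothesis. The only minor variation is in step~(a): the paper observes directly that $J=\langle\widehat{Z}_\sigma^{m_\sigma}\mid\sigma\in\Sigma\rangle_{S_B}=\lfloor I\rfloor_B$ and invokes Proposition~\ref{2.30}, whereas you pass through the radical equality $\sqrt{\mathfrak{j}}=\sqrt{I}$ and invoke Proposition~\ref{2.40}~a); since \ref{2.40}~a) rests on \ref{2.30}, the two routes are interchangeable, and for step~(b) both you and the paper defer to an external reference (the paper cites \cite[Proposition~7]{coarsening}).
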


\begin{proof}
Since $\langle\widehat{Z}_{\sigma}^{m_{\sigma}}\mid\sigma\in\Sigma\rangle_{S_B}=\lfloor I\rfloor_B$ this follows from \ref{2.30}, \ref{3.50} and the proof of \cite[Proposition 7]{coarsening}.
\end{proof}

It remains the question about a converse of the above result, i.e., Does flatness of certain components of $F$ imply flatness of $\mathscr{S}_B(F)$? Considering the analogous results in the case of projective schemes one should be able to tackle this problem using bounds for some notion of Castelnuovo-Mumford regularity appropriate to our setting (cf.~\cite{bc}, \cite{ms}). We hope to address this in some future work.

%%%%%%%%%%%%%%%%%%%%%%%%%%%%%%%%%%%%%%%%%%%%%%%%%%%%%%%%%%%%%%%%%%%%%%%%%%%

\medskip

\noindent\textbf{Acknowledgement:} I thank the referee for the very careful reading and for his comments and suggestions.

%%%%%%%%%%%%%%%%%%%%%%%%%%%%%%%%%%%%%%%%%%%%%%%%%%%%%%%%%%%%%%%%%%%%%%%%%%%

\end{document}